\newtheorem{theorem}{Theorem}
\begin{document}

%\title{Gautschi method without order reduction when integrating  boundary value nonlinear wave problems}
%%\author{}
%\author{
%{\sc  B. Cano \thanks{Corresponding author. Email:  bego@mac.uva.es} } \\ \small
%IMUVA, Departamento de Matem\'atica Aplicada,\\ \small Facultad de Ciencias, Universidad de
%Valladolid,\\ \small Paseo de Bel\'en 7, 47011 Valladolid,\\ \small Spain \\
%{\sc and}\\
%{\sc M. J. Moreta}\thanks{Email: mjesusmoreta@ccee.ucm.es} \\
%\small IMUVA,
%Departamento de Fundamentos del Análisis Económico I, \\ \small
%Facultad de Ciencias Económicas y Empresariales, Universidad
%Complutense de Madrid, \\[2pt] \small Campus de Somosaguas, Pozuelo de Alarcón,
%28223 Madrid, \\ \small Spain.
%}
%\date{}

%\maketitle

\title{Looking for efficiency when avoiding order reduction\\
  in nonlinear problems with Strang splitting}

\author{{\sc I. Alonso-Mallo \thanks{Email: isaias@mac.uva.es}}, 
{\sc B. Cano \thanks{Email: bego@mac.uva.es}}
\\ \small
IMUVA, Departamento de Matem\'atica Aplicada ,\\ \small Facultad de Ciencias, Universidad de
Valladolid,\\ \small Paseo de Bel\'en 7, 47011 Valladolid,\\ \small Spain \\
{\sc N. Reguera \thanks{Email: nreguera@ubu.es}} \\
\small IMUVA,
Departamento de Matem\'aticas y Computaci\'on, \\ \small
Escuela Polit\'ecnica Superior, Universidad
de Burgos, \\ \small Avda. Cantabria, 09006 Burgos, \\ \small Spain.}

\date{}
\maketitle

%\date{\today}

\begin{abstract}
In this paper, we offer a comparison in terms of computational efficiency between two techniques to avoid order reduction when using Strang method to integrate nonlinear initial boundary value problems with time-dependent boundary conditions. Considering different implementations for each of the techniques, we show that the technique suggested by Alonso et al. is more efficient than the one suggested by Einkemmer et al. Moreover, for one of the implementations of the technique by Alonso et al. we justify its order through the proof of some new theorems.
\end{abstract}

{\bf Keywords}: Strang splitting, avoiding order reduction, computational comparison 
%\msccodes 65M12 65M20

%
%  Main text of the article.
%

\section{Introduction}
There are several papers in the literature concerning the important fact of avoiding the order reduction in time which turns up when integrating with splitting methods nonlinear problems
of the form
\begin{eqnarray}
u'(t) &=& A u(t)+f(t,u(t)), \quad 0 \le t \le T, \nonumber \\
\partial u(t)&=&g(t), \nonumber \\
u(0)&=&u_0, \label{ibvp}
\end{eqnarray}
where $A$ is an elliptic differential operator, $f$ is a smooth real function which acts as a reaction term, $\partial$ is a boundary operator, $g$ is the boundary condition which in principle does not vanish and is time-dependent and $u_0$ is a smooth initial condition which makes that the solution of  (\ref{ibvp}) is regular enough.

 More particularly, in \cite{EO1,EO2} a technique is suggested to do it, in which each part of the splitting is assumed to be solved in an exact way for the analysis and, in the numerical experiments, standard subroutines are used to integrate each part in space and time. Although, from the point of view of the analysis, the technique in both papers is equivalent, the difference is that, in \cite{EO1}, the solution of this elliptic problem is required at each time $t\in[0,T]$,
 $$Az(t)=0, \quad \partial z(t)=g(t),$$
 and a suggestion for $z_t(t)$ must be given. This can be very simple analytically in one dimension, but it is much more complicated and expensive in several dimensions. Nevertheless, that is avoided in \cite{EO2} by considering  just a function $q$ which coincides with $f(g)$ at the boundary. That can also be done analytically in one dimension and simple domains in two dimensions, and numerically in more complicated domains, according to a remark made in \cite{EO2} although it is not in fact applied to such a problem there. In this paper, we will concentrate on the technique in \cite{EO2} for 1-dimensional and 2-dimensional simple domains.

On the other hand, in \cite{ACRnl}, another different technique is suggested in which appropriate boundary conditions are suggested for each part of the splitting. The analysis there considers both the space and time discretization. The linear and stiff part is integrated \lq exactly' in time through exponential-type functions while the nonlinear but smooth part is assumed to be numerically integrated by a classical integrator just with the order of accuracy that the user wants to achieve with the whole method. Although the latter seems to be the most natural, in order to be more similar in the comparison with the technique in \cite{EO2}, we will use standard subroutines which use variable stepsizes with given small tolerances for the nonlinear and smooth problems of both techniques.

We will concentrate on the extensively used second-order Strang splitting and the aim of the paper is to compare both techniques in terms of computational efficiency, considering different space discretizations, different tolerances for the standard subroutines which integrate in time some of the split problems, and different (although standard) ways to tackle the calculation of terms which contain exponential-type functions of matrices. For that, both one-dimensional and bidimensional problems will be considered. There is already another comparison in the literature between both techniques \cite{EO3} but there they just compare in terms of error against the time stepsize without entering into the details of implementation and its computational cost, which we believe that is the interesting comparison. Moreover, they just consider time-independent boundary conditions and $1$-dimensional problems, for which many simplifications can be made.

The paper is structured as follows.  Section 2 gives some preliminaries on the description of the different techniques and suggest different implementations for each of them.
Section 3 presents results for all the suggested methods in terms of error against cpu time when using accurate spectral collocation methods in space and very small tolerances for the
standard subroutines in time. Section 4 offers also a numerical comparison, but now using less accurate finite differences in space and less small tolerances for the standard
subroutines in time. Moreover, numerical differentiation is also considered in order to achieve local order 3 instead of just 2, and again the computational comparison is performed.
Finally, in an appendix, a thorough error analysis (including numerical differentiation) is given for one of the implementations of the main technique which was suggested in \cite{ACRnl}, but which modifications were not
included in the analysis.

\section{Preliminaries and suggestion of different implementations}

The technique which is suggested in \cite{EO2} consists of the following: A function $q(t)$ is constructed which satisfies $\partial q(t)=\partial f(t,u(t))$. Then, given the numerical approximation at the previous step $u_n$, the numerical approximation at the next step $u_{n+1}$ is given by the following procedure:
\begin{eqnarray}
&&\left\lbrace\begin{array}{rcl} v_{n,1}'(t)&=& A v_{n,1}(t)+q(t),  \\
v_{n,1}(t_n)&=&u_n, \\
\partial v_{n,1}(t)&=&g(t), \end{array}\right. \nonumber \\
&&\left\lbrace\begin{array}{rcl}  w_n'(t)&=&f(t,w_n(t))-q(t),  \\
w_n(t_n)&=& v_{n,1}(t_n+\frac{k}{2}), \end{array} \right.\nonumber \\
&&\left\lbrace\begin{array}{rcl} v_{n,2}'(t)&=& A v_{n,2}(t)+q(t),  \\
v_{n,2}(t_n+\frac{k}{2})&=&w_n(t_n+k), \\
\partial v_{n,2}(t)&=&g(t), \end{array}\right. \nonumber \\
&&u_{n+1}=v_{n,2}(t_n+k). \label{impl2eo}
\end{eqnarray}
However, we notice that two of three problems which turn up here are stiff and therefore solving them will be more expensive than solving the unique nonlinear but smooth problem. In order to reverse that, the decomposition of the splitting method can be done in another order and then the following procedure would turn up, for which with similar arguments, no order reduction would either turn up:
\begin{eqnarray}
&&\left\lbrace\begin{array}{rcl} w_{n,1}'(t)&=& f(t,w_{n,1}(t))-q(t), \\
w_{n,1}(t_n)&=&u_n, \end{array} \right.\nonumber \\
&&\left\lbrace\begin{array}{rcl}  v_n'(t)&=& A v_n(t)+q(t),  \\
v_n(t_n)&=& w_{n,1}(t_n+\frac{k}{2}), \\
\partial v_n(t)&=&g(t)\end{array} \right.\nonumber \\
&&\left\lbrace\begin{array}{rcl} w_{n,2}'(t)&=& f(t,w_{n,2}(t))-q(t),  \\
w_{n,2}(t_n+\frac{k}{2})&=&v_n(k), \end{array}\right. \nonumber \\
&&u_{n+1}=w_{n,2}(t_n+k). \label{impl1eo}
\end{eqnarray}
Then, two of the problems are cheap and just one is more expensive.

On the other hand, in \cite{ACRnl}, the main idea is to consider, from $u_n$,
\begin{eqnarray}
&&\left\lbrace\begin{array}{rcl}
w_{n}'(s)&=&A w_{n}(s),\\
w_{n}(0)&=&\Psi_{\frac{k}{2}}^{f,t_n}(u_n),\\
\partial w_{n}(s)&=&\partial [ u(t_n)+\frac{k}{2}f(t_n,u(t_n))+s A u(t_n)],
\end{array} \right.\nonumber \\
&& u_{n+1}=\Psi_{\frac{k}{2}}^{f,t_n+\frac{k}{2}}(w_n(k)). \nonumber
\end{eqnarray}
where $\Psi_{\frac{k}{2}}^{f,t_n}$ and $\Psi_{\frac{k}{2}}^{f,t_n+\frac{k}{2}}$ integrate respectively with order $2$ the following problems from $s=0$ to $s=k/2$:
$$
v'_{n}(s)=f(t_n+s, v_n(s)), \quad z_n'(s)=f(t_n+\frac{k}{2}+s, z_n(s)).$$
Moreover, the procedure to integrate this is more explicitly stated. Firstly, in \cite{ACRnl}
(see also \cite{ACRl,ACRLaw,CR}), a general space discretization is introduced which discretizes the elliptic problem
$$A u=F, \quad \partial u=g,$$
through the \lq elliptic projection' $R_h u$ which satisfies
$$A_{h,0} R_h u+ C_h g= P_h F,$$
for a certain matrix $A_{h,0}$, an associated boundary operator $C_h$ and a projection operator $P_h$. Then, given the numerical approximation at the previous step $U_h^n$, the procedure in \cite{ACRnl} to obtain $U_h^{n+1}$ reads as follows:
\begin{eqnarray}
V_h^n &=& \Psi_{\frac{k}{2}}^{f,t_n}(U_h^n), \nonumber \\
W_{h,n}(k)&=&e^{k A_{h,0}} V_{h}^n+ k \varphi_1( k A_{h,0})C_h [g(t_n)+\frac{k}{2} \partial
       f(t_n,u(t_n))]
\nonumber \\&& + k^2 \varphi_2( k A_{h,0}) C_h [g'(t_n)-\partial f(t_n,u(t_n)] \nonumber \\
U_h^{n+1}&=&\Psi_{\frac{k}{2}}^{f,t_n+\frac{k}{2}}(W_{h,n}(k)), \label{impl1acr}
\end{eqnarray}
where $\varphi_1$ and $\varphi_2$ are the standard functions which are used in exponential methods \cite{ACRnl}.
The original suggestion used this order for the decomposition thinking that $\Psi_k$ is just an explicit method which is applied with a single stepsize $k$, and therefore it would be cheaper than the equation in $W_{h,n}(k)$. We still believe that would be the best. However, as in this paper, in order to do it more similarly to \cite{EO2}, we will solve that part with a standard variable stepsize subroutine for non-stiff problems until a given small tolerance, the first and last problem may be more expensive than the middle one. Therefore, we will also consider this other implementation which comes from reversing the order of the problems in the decomposition (see the appendix):
\begin{eqnarray}
W_{h,n}(\frac{k}{2})&=&e^{\frac{k}{2} A_{h,0}} U_{h}^n+ \frac{k}{2} \varphi_1( \frac{k}{2} A_{h,0})C_h g(t_n)+\frac{k^2}{4} \varphi_2( \frac{k}{2} A_{h,0})C_h\partial
       Au(t_n)
\nonumber \\
V_h^n &=& \Psi_{k}^{f,t_n}(W_{h,n}(\frac{k}{2})), \nonumber \\
U_h^{n+1}&=&e^{\frac{k}{2} A_{h,0}}V_{h}^n + \frac{k}{2} \varphi_1( \frac{k}{2} A_{h,0})C_h \partial [u(t_n)+\frac{k}{2}A u(t_n)+ k f(t_n,u(t_n)] \nonumber \\
&& + \frac{k^2}{4} \varphi_2( \frac{k}{2} A_{h,0}) C_h \partial A u(t_n).
\label{impl2acr}
\end{eqnarray}
In the following, we will denote by {\bf EO1} to (\ref{impl1eo}), by {\bf EO2} to (\ref{impl2eo}), by {\bf ACR1} to (\ref{impl1acr}) and by {\bf ACR2} to (\ref{impl2acr}).

\section{Numerical comparison with spectral collocation methods in space and high accuracy in time}

In this section, a comparison in terms of computational efficiency among the different techniques is given when solving all the problems at hand with high accuracy.
In such a way, we will be seeing the error which comes from the splitting itself for a large range of values of the timestepsize $k$.
More precisely, spectral collocation methods  \cite{BM} are used for the space discretization with the two implementations of both techniques. Moreover, the nonlinear and non-stiff problems of both techniques are integrated with MATLAB subroutine ode45
with relative tolerance $10^{-12}$ and absolute tolerance $10^{-15}$. As for the linear and stiff part of {\bf EO1} and {\bf EO2} ((\ref{impl1eo}) and (\ref{impl2eo}) respectively), we have considered subroutine ode15s with the same tolerances. On the other hand, in this section, for the implementation of the equation on $W_{h,n}$ in {\bf ACR1} and the first and last equation in {\bf ACR2} ((\ref{impl1acr}) and (\ref{impl2acr}) respectively), as $A_{h,0}$ is a full but small matrix and we consider $k$ as fixed during the whole integration, we have calculated once and for all the matrices $e^{k A_{h,0}}$, $\varphi_1(k A_{h,0})C_h$ and  $\varphi_2(k A_{h,0})C_h$.
The calculation of these matrices is not included in the measured computational time since this initial cost is negligible when integrating till large times but may be very big when $T$ is small. (For another implementation of those terms with no initial cost, look at Section \ref{df}.)

In a first place, we have considered the following one-dimensional Dirichlet boundary value problem whose exact solution is $u(x,t)=e^{t+x^3}$:
\begin{eqnarray}
&&u_t(x,t)=u_{xx}(x,t)+u^2-e^{t+x^3}(9 x^4+ 6 x+e^{t+x^3}-1), \quad 0\le x \le 1, \nonumber \\
&&u(x,0)=e^{x^3}, \nonumber \\
&&u(0,t)=e^t, \quad u(1,t)=e^{t+1}, \quad  t\in[0,0.2]. \label{1dimp}
\end{eqnarray}
For the  spectral space discretization, $16$ Gauss-Lobatto interior nodes have been used so that the error in space is negligible. We also notice that the matrix  $e^{k A_{h,0}}$ has
dimension $16\times 16$ while $\varphi_1(k A_{h,0})C_h$ and  $\varphi_2(k A_{h,0})C_h$ have dimension $16\times 2$ since they are just multiplied by the values at the boundary. Therefore, the calculation of the terms in $e^{k A_{h,0}}$ is more expensive than the calculation of those in
$\varphi_i(k A_{h,0})C_h$ ($i=1,2$). On the other hand, as it is a one-dimensional problem, the function $q$ in {\bf EO1} and {\bf EO2} is calculated directly for every value of $t$
as the straight line which joins the corresponding values $f(t,0,e^t)$ and $f(t,1,e^{t+1})$ at $x=0$ and $x=1$ respectively.
The values of the time stepsize which have been displayed have been $k=10^{-3}, 5\times 10^{-4}, 2.5 \times 10^{-4}, 1.25 \times 10^{-4}, 6.25 \times 10^{-5}$ for {\bf EO} and
$k=10^{-3}, 5\times 10^{-4}, 2.5 \times 10^{-4}, 1.25 \times 10^{-4}, 6.25 \times 10^{-5}, 3.125 \times 10^{-5}$ for {\bf ACR}. The results in terms of maximum error against computational cost are in Figure \ref{f1} and we can see
that techniques \emph{ {\bf ACR1} and {\bf ACR2} are more efficient than  {\bf EO1} and  {\bf EO2}}, that
{\bf ACR2} is three times more efficient than {\bf ACR1} and that {\bf EO1} and {\bf EO2} are very similar in efficiency. In any case, we can check that, as already remarked in
the previous section, for a fixed value of $k$,  {\bf ACR2} is cheaper than {\bf ACR1} and {\bf EO1} cheaper than {\bf EO2}. Moreover we can observe that, at least for this particular problem, for fixed $k$, the error is smaller with the second implementation of both techniques than with the first.

\begin{figure}
\begin{center}
\includegraphics[height=3in,width=4in]{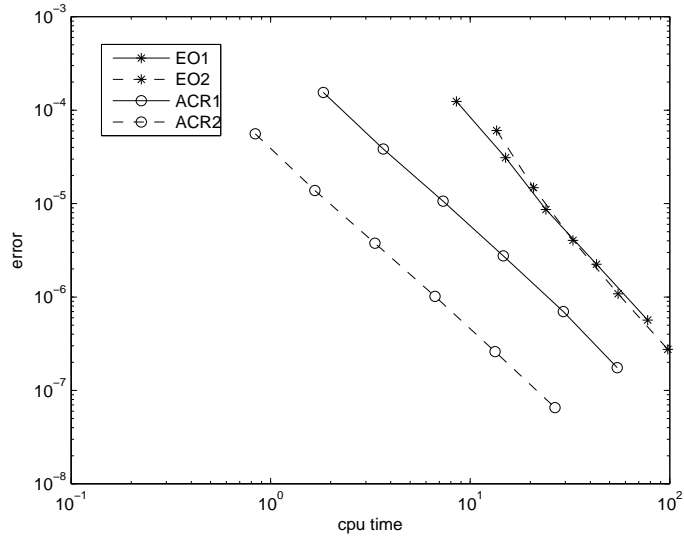}
\end{center}
\caption{Numerical comparison with spectral collocation methods in space and high accuracy in time for the 1-dimensional problem (\ref{1dimp})} \label{f1}
\end{figure}

\begin{figure}
\begin{center}
\includegraphics[height=3in,width=4in]{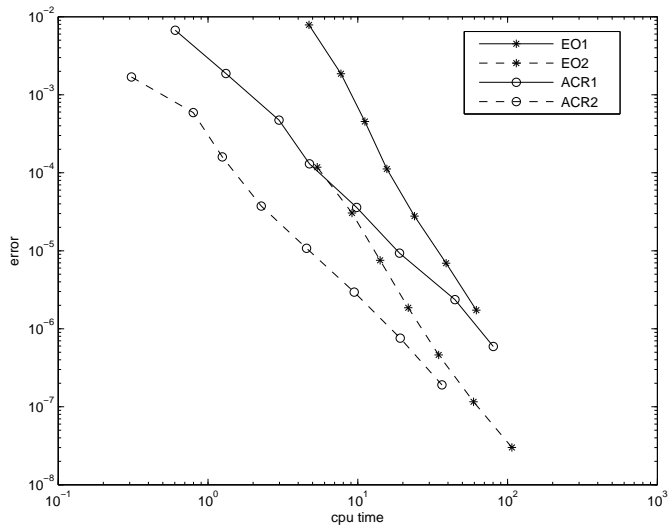}
\end{center}
\caption{Numerical comparison with spectral collocation methods in space and high accuracy in time for the 2-dimensional problem (\ref{2dimp})} \label{f2}
\end{figure}

In a second place, we have considered the two-dimensional problem
\begin{eqnarray}
&&u_t(x,y,t)=u_{xx}(x,y,t)+u_{yy}(x,y,t)+f(t,x,y,u(x,y,t)), \quad 0\le x,y \le 1, \nonumber \\
&&u(x,y,0)=e^{x^3+y^3}, \nonumber \\
&&u(0,y,t)=e^{t+y^3}, \quad u(1,y,t)=e^{t+1+y^3}, \nonumber \\
&&u(x,0,t)=e^{t+x^3}, \quad u(x,1,t)=e^{t+1+x^3}, \quad t\in[0,0.2]. \label{2dimp}
\end{eqnarray}
where $f(t,x,y,u)=u^2 -
e^{t+x^3+y^3}(9(x^4+y^4)+6(x+y)+e^{t+x^3+y^3}-1)$, so that the exact solution is $u(x,y,t)=e^{t+x^3+y^3}$.
Now, $16$ interior Gauss-Lobatto nodes have been taken in each direction of the square for the space discretization and the implementation has been performed with similar remarks
to those of the one-dimensional case. The only remarkable difference is that the function $q(t,x,y)$ in {\bf EO1} and {\bf EO2} must be chosen in a different way. We consider a
function  of the form $q(t,x,y)=r(t,x) f(t,1,y,e^{t+1+y^3})+s(t,x) f(t,0,y,e^{t+y^3})$ which satisfies the corresponding conditions at the boundary and that is achieved if $r(t,x)$ and
$s(t,x)$ satisfy
\begin{eqnarray}
\left( \begin{array}{cc} f(t,1,0,e^{t+1}) & f(t,0,0,e^{t}) \\
                         f(t,1,1,e^{t+2}) & f(t,0,1,e^{t+1}) \end{array} \right) \left( \begin{array}{cc} r(t,x) \\ s(t,x) \end{array} \right)=\left( \begin{array}{c}
                         f(t,x,0,e^{t+x^3}) \\ f(t,x,1,e^{t+1+x^3})\end{array} \right).
                         \nonumber
\end{eqnarray}
(Notice that this technique to calculate $q$ analytically can be applied in a rectangular domain but not in more complicated domains in two dimensions.)
In Figure \ref{f2}, which corresponds to the following values of the timestepsize $k=2 \times 10^{-2}, 10^{-2}, 5 \times 10^{-3}, 2.5  \times 10^{-3},
1.25 \times 10^{-3}, 6.25 \times 10^{-4}, 3.125 \times 10^{-4}$ for {\bf EO} and $k=2.5  \times 10^{-3},
1.25 \times 10^{-3}, 6.25 \times 10^{-4}, 3.125 \times 10^{-4}, 1.5625 \times 10^{-4}, 7.8125 \times 10^{-5}, 3.9063 \times 10^{-5}, 1.9531 \times 10^{-5}$ for {\bf ACR}, we can  see that
the second implementation of both techniques is cheaper than the first and that \emph{the best of all implementations is
 {\bf ACR2}}, at least for a range of errors $\ge 10^{-7}$.

\section{Numerical comparison with finite difference methods in space and middle accuracy in time}
\label{df}

\begin{figure}
\begin{center}
\includegraphics[height=3in,width=4in]{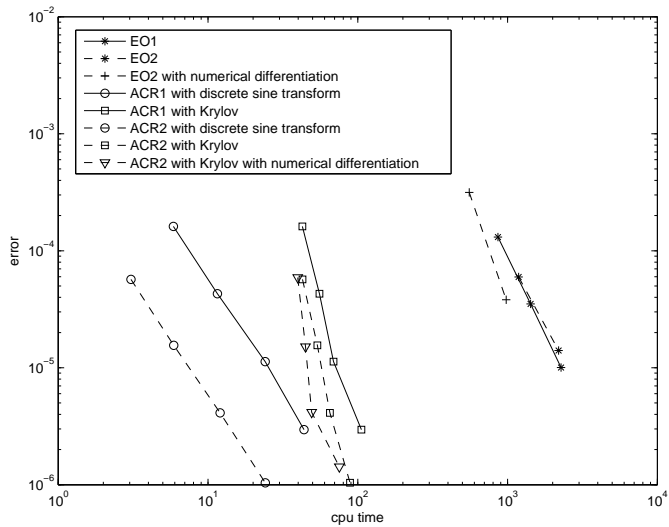}
\end{center}
\caption{Numerical comparison with finite difference methods in space and middle accuracy in time for the 1-dimensional problem (\ref{1dimp})} \label{f3}
\end{figure}

\begin{figure}
\begin{center}
\includegraphics[height=3in,width=4in]{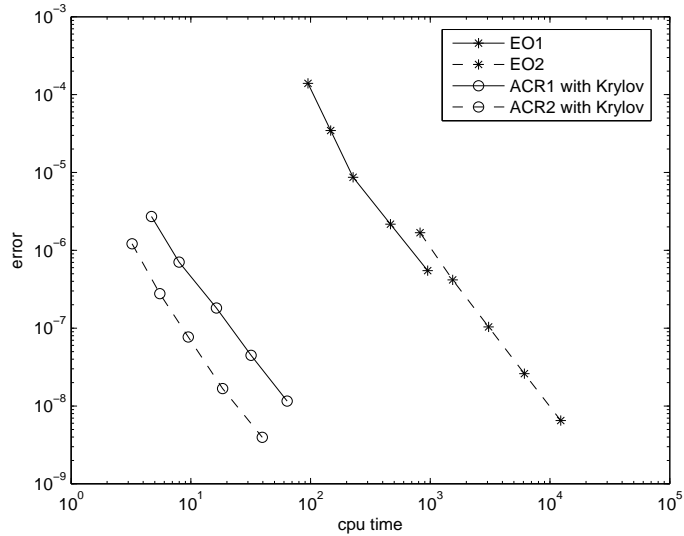}
\end{center}
\caption{Numerical comparison with finite difference methods in space and middle accuracy in time for the 2-dimensional problem (\ref{2dimpb})} \label{f4}
\end{figure}

In this section we have been a bit less demanding when solving each part of the splitting.  We have just considered $10^{-7}$ and
$10^{-8}$ as relative and absolute tolerances respectively for the standard subroutines ode45 and ode15s. As Strang method just has second-order accuracy,
it is usually used for problems in which a very high precision is not required.
Moreover, in space we have considered finite differences of just second
order accuracy in the space grid. More particularly, as in the problems above the operator $A$ is the Laplacian, we have taken the standard symmetric second-order difference scheme
in $1$ dimension and the five-point formula in $2$ dimensions \cite{S}. We have considered as space grid $h=5 \times 10^{-4}$ for the $1$-dimensional case and $h=2 \times 10^{-2}$
for the $2$-dimensional case. With this type of implementation, the
matrix $A_{h,0}$ is sparse and, in this particular case, their eigenvalues and eigenvectors are well-known \cite{I}. Because of the former, it is natural to use standard Krylov
subroutines \cite{N} in {\bf ACR1} and {\bf ACR2} to calculate the application of exponential-type functions over vectors. Due to the latter, which is more specific of this
particular example and space discretization, in order to calculate the same terms, it seems  advantageous to use the discrete sine transform in the same way that FFT is used in
Poisson solvers \cite{I}. When using Krylov subroutines \cite{N}, we have considered the default tolerance $10^{-7}$.  The comparison is performed in Figure \ref{f3}
for the
1-dimensional problem (\ref{1dimp}) with $k=10^{-3}, 5 \times 10^{-4}, 2.5 \times 10^{-4}$ for {\bf EO1},
$k=10^{-3}, 5 \times 10^{-4}$ for {\bf EO2} and $k=10^{-3}, 5 \times 10^{-4}, 2.5 \times 10^{-4}, 1.25 \times 10^{-4}$ for {\bf ACR}.
\emph{We again see that {\bf ACR1} and  {\bf ACR2} are more competitive than  {\bf EO1} and  {\bf EO2}}. Although, for a fixed value of $k$, {\bf EO2} takes more computational time than {\bf EO1}, in the end they are very similar in efficiency since, at least in this case, the error is also quite smaller. As for {\bf ACR1} and {\bf ACR2}, {\bf ACR2} is more competitive since not only the computational time is smaller for a fixed value of $k$  but also the error is smaller. In this particular case, considering discrete sine transforms is much cheaper than using Krylov techniques. However, for a general operator $A$, that may not be possible and that is why it is also interesting to see the comparison when using these techniques. In any case, \emph{the worst of {\bf ACR} implementations is about 20  times cheaper than the best of  {\bf EO}}.

Moreover,  following \cite{EO3}, we have also considered numerical differentiation in order to try to get local order $3$ with {\bf EO1} and {\bf EO2} in (\ref{1dimp}).
More precisely, theoretically, a function $q$ should be taken for which $\partial q(t)= \partial f(t,u(t))$ and $\partial A q(t)=\partial A f(t,u(t)).$
Although, even if we were able to construct that function, the order for the global error does not improve, it is interesting to see whether the fact that the local errors maybe smaller implies a better overall behaviour.
Notice that, in (\ref{1dimp}),
\begin{eqnarray}
\frac{d}{dx^2} f=f_{xx}+ 2 f_{x,u} u_x+ f_{uu} u_x^2 + f_u u_{xx}.
\label{d2f}
\end{eqnarray}
As $\partial u_{xx}=g'(t)-\partial f(t,u)$, numerical differentiation is just required to calculate $\partial u_x$. For that, we have considered the second-order scheme
\begin{eqnarray*}
 u_x(0,t)&\approx& \frac{-\frac{3}{2}u(0,t)+2 u(h,t)-\frac{1}{2}u(2h,t)}{h}, \\
 u_x(1,t)&\approx& \frac{\frac{3}{2}u(1,t)-2 u(1-h,t)+\frac{1}{2}u(1-2h,t)}{h}.
 \end{eqnarray*}
>From a theoretical point of view, to achieve local order $3$, at each step we would need these derivatives at any continuous time $t\in [t_n,t_{n+1})$. However, we just have approximations for the interior values $u(h,t)$, $u(2h,t)$, $u(1-h,t)$, $u(1-2h,t)$ at time $t_n$. Because of this, in formula (\ref{d2f}), we have evaluated all terms at continuous $t$ except for the term $u_x$, which is just approximated at $t=t_n$. In such a way, although not shown here for the sake of brevity, the local error shows order a bit less than $3$ but higher than $2.5$. Besides, although numerical differentiation is a badly-posed problem, its effect is still not visible with the considered value of $h$ for the first derivative and the range of errors which we are considering. The results for {\bf EO2} with numerical differentiation are shown in Figure \ref{f3} for $k=2\times 10^{-3}$ and $k=10^{-3}$. We can see that, \emph{in terms of computational efficiency, numerical differentiation is slightly worth doing}.

As for {\bf ACR2}, considering also terms of second order in $s$ for the boundaries of the problems in which the operator $A$  appears,
the following full scheme turns up (see the appendix):
\begin{eqnarray}
W_{h,n}(\frac{k}{2})&=&e^{\frac{k}{2} A_{h,0}} U_{h}^n+ \frac{k}{2} \varphi_1( \frac{k}{2} A_{h,0})C_h g(t_n)+\frac{k^2}{4} \varphi_2( \frac{k}{2} A_{h,0})C_h\partial
       Au(t_n)
\nonumber \\
&&+\frac{k^3}{8} \varphi_3(\frac{k}{2}A_{h,0}) C_h \partial A^2 u(t_n)\nonumber \\
V_h^n &=& \Psi_{k}^{f,t_n}(W_{h,n}(\frac{k}{2})), \nonumber \\
U_h^{n+1}&=&e^{\frac{k}{2} A_{h,0}}V_h^n
\nonumber \\
&&+ \frac{k}{2} \varphi_1( \frac{k}{2} A_{h,0})C_h \partial [u(t_n)+k(\frac{1}{2}  A u(t_n)+ f(t_n,u(t_n))
\nonumber \\
&& \hspace{1cm}
+k^2(\frac{1}{8} A^2 u(t_n)+\frac{1}{2} f_u(t_n,u(t_n)) A u(t_n)  \nonumber \\
&& \hspace{1cm}
+\frac{1}{2}(f_t(t_n,u(t_n))+f_u(t_n,u(t_n))f(t_n,u(t_n)))] \nonumber \\
&& + \frac{k^2}{4} \varphi_2( \frac{k}{2} A_{h,0}) C_h \partial [A u(t_n)+\frac{k}{2} A^2 u(t_n)+k A f(t_n,u(t_n))]
\nonumber \\
&&+
\frac{k^3}{8} \varphi_3( \frac{k}{2} A_{h,0}) C_h \partial A^2 u(t_n).
\label{acr2dn}
\end{eqnarray}
As $\partial A^2 u=\partial A\dot{u}- \partial A f=\ddot{g}-\partial (f_t+f_u \dot{u})-\partial A f$, what is again necessary is to approximate $u_x$ with numerical differentiation and we have done it in the same way as before.
As it is observed in Figure \ref{f3}, \emph{there is a small ganancy in efficiency when using numerical differentiation with
{\bf ACR2} although it is not extremely significant}.

Notice that for both {\bf EO2} and {\bf ACR2}, for a fixed value of $k$, the computational cost does not increase but is slightly smaller when using numerical differentiation. This must be due to the fact that the standard subroutines which are used converge more quickly when numerical differentiation is applied. A full explanation for that is out of the scope of this paper although it might be a subject of future research.

Let us now see what happens with a bidimensional problem. In order to assure that the errors in space are negligible without having to decrease too much the space grid, we have considered
\begin{eqnarray}
&&u_t(x,y,t)=u_{xx}(x,y,t)+u_{yy}(x,y,t)+f(t,x,y,u(x,y,t)), \quad 0\le x,y \le 1, \nonumber \\
&&u(x,y,0)=x^2+y^2, \nonumber \\
&&u(0,y,t)=e^{t}y^2, \quad u(1,y,t)=e^{t}(1+y^2), \nonumber \\
&&u(x,0,t)=e^{t}x^2, \quad u(x,1,t)=e^{t}(1+x^2), \quad t\in[0,0.2]. \label{2dimpb}
\end{eqnarray}
where $f(t,x,y,u)=u^2 -
e^{2t}(x^2+y^2)^2+e^t(x^2+y^2-4)$, so that the exact solution is $u(x,y,t)=e^{t}(x^2+y^2)$.
We have implemented {\bf EO1} and {\bf EO2} calculating $q$ in a similar way as in the bidimensional problem of the previous section and {\bf ACR1} and {\bf ACR2} again with Krylov subroutines \cite{N}. In Figure \ref{f4} we have displayed the results corresponding to {\bf EO1} and {\bf EO2} with  $k=10^{-2}, 5 \times 10^{-3}, 2.5 \times 10^{-3}, 1.25 \times 10^{-3}, 6.25 \times 10^{-4}$ and to {\bf ACR1} and {\bf ACR2} with $k= 1.25 \times 10^{-3}, 6.25 \times 10^{-4}, 3.125 \times 10^{-4}, 1.5625 \times 10^{-4}, 7.8125 \times 10^{-5}$. We can see that, in this problem, the second implementation is the most efficient for {\bf ACR} and the first is the best for {\bf EO}. Moreover, \emph{{\bf ACR2} is about  300 times more efficient than {\bf EO1}}.

Considering numerical differentiation in two dimensions is also possible but we would like to remark that, with {\bf EO} techniques, that it is not as plausible as in one dimension since, apart from approximating numerically $\partial A f(t,u(t))$ at each step and calculating a function $\tilde{q}(t)$ which coincides with it at the boundary, a function $q(t)$ must be calculated such that
\begin{eqnarray}
Aq(t)&=&\tilde{q}(t), \nonumber \\
\partial q(t) &=&\partial f(t,u(t)).
\label{elliptic}
\end{eqnarray}
In one dimension, this was achieved just by integrating twice the linear function $\tilde{q}(t)$ and that was done analytically for every value of $t\in [t_n,t_{n+1})$. However, in two dimensions, that cannot be done any more and the elliptic problems (\ref{elliptic}) should be numerically solved, not only for every value $t_n$, but even theoretically for every $t\in [t_n,t_{n+1})$.
In contrast, notice that numerical differentiation with ACR (\ref{acr2dn}) just requires approximating $\partial A f(t_n,u(t_n))$ at each step and no elliptic problem must be numerically solved at continuous time.
Besides, with respect to the same method but without numerical differentation, the additional cost mainly consists of just two more terms per step which contain $\varphi_3(\frac{k}{2} A_{h,0})$.
In any case, we do not either include numerical differentiation with {\bf ACR} here for the sake of clarity and brevity.

%\newpage

\section*{Acknowledgements}

This work has been  supported by project MTM 2015-66837-P.

%
%  Bibliography. Follow the usual conventions.
%
\section*{Appendix: Analysis of ACR2 with and without numerical differentiation}
In this appendix, we state where formula (\ref{impl2acr}) comes from for ACR2 implementation and justify that the local and global error behaves with second order of accuracy.
Moreover, we also state where formula (\ref{acr2dn}) comes from for ACR2 implementation with numerical differentation and justify that the local error behaves with third order of accuracy in such a case. We concentrate here on the results for the local errors after time semidiscretization since the results for
the errors after full discretization would follow in the same way than in \cite{ACRnl}. As in \cite{ACRnl}, the restriction of the operator $A$ to the domain with
vanishing boundary is denoted by $A_0$, which is a generator of a $C_0$-semigroup which is denoted by $e^{t A_0}$.

The problems to be solved after time semidiscretization are
\begin{eqnarray}
\left\{ \begin{array}{rcl} w_{n,1}'(s)&=& A w_{n,1}(s),  \\
w_{n,1}(0)&=& u_n,  \\
\partial w_{n,1}(s)&=& \partial \hat{w}_{n,1}(s),
\end{array} \right.
 \label{wn1} \\
 %\left\{ \begin{array}{rcl} v_n'(s)&=& f(t_n+s,v_n(s)) \\
 %v_n(0)&=& w_{n,1}(\frac{k}{2}), \end{array} \right.  \\
 \left\{ \begin{array}{rcl} w_{n,2}'(s)&=& A w_{n,2}(s),  \\
w_{n,2}(0)&=& \Psi_k^{f,t_n} (w_{n,1}(\frac{k}{2})),  \\
\partial w_{n,2}(s)&=& \partial \hat{w}_{n,2}(s),
\end{array} \right.
 \label{wn2}
\end{eqnarray}
with
\begin{eqnarray}
\hat{w}_{n,1}(s)&=&u(t_n)+s A u(t_n), \label{what1} \\
\hat{w}_{n,2}(s)&=&u(t_n)+\frac{k}{2} A u(t_n)+k f(t_n,u(t_n))+s A u(t_n). \label{what2}
\end{eqnarray}
Then,
$$u^{n+1}=w_{n,2}(\frac{k}{2}),$$
and the following result follows.
\begin{theorem}
Under the same hypotheses of Theorem 2 in \cite{ACRnl}, when integrating (\ref{ibvp}) with Strang method using the technique (\ref{wn1})-(\ref{wn2}) with $\hat{w}_{n,1}$ in (\ref{what1}) and $\hat{w}_{n,2}$ in (\ref{what2}), the local error $\rho_{n+1}$ satisfies $\rho_{n+1}=O(k^2)$.
\end{theorem}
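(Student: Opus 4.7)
My plan is to carry out a direct Taylor-and-Duhamel analysis in the style of \cite{ACRnl}. Since the statement is about the local error, I start by setting $u_n=u(t_n)$ and aim to show that $u^{n+1}=w_{n,2}(k/2)$ agrees with $u(t_n+k)$ up to an $O(k^2)$ remainder. The central device will be to split each linear subproblem by writing $w_{n,j}(s)=\hat{w}_{n,j}(s)+v_{j}(s)$: by construction the boundary operator annihilates $v_j$, so the correction lives in the domain of $A_0$, and Duhamel's formula with the $C_0$-semigroup $e^{sA_0}$ and the usual $\varphi_1,\varphi_2$ functions applies. This is the same reduction used in \cite{ACRnl}, but adapted to the reversed ordering of the splitting.

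For the first linear step, the initial data $u_n=u(t_n)=\hat{w}_{n,1}(0)$ yields $v_{1}(0)=0$, while the defect $A\hat{w}_{n,1}(s)-\hat{w}_{n,1}'(s)=sA^{2}u(t_n)$, together with the standard identity $\int_0^s e^{(s-\sigma)A_0}\sigma\,\mathrm{d}\sigma=s^{2}\varphi_2(sA_0)$, gives the explicit expression $v_{1}(s)=s^{2}\varphi_2(sA_0)A^{2}u(t_n)$. At $s=k/2$ this is $O(k^2)$, so $w_{n,1}(k/2)=u(t_n)+\tfrac{k}{2}Au(t_n)+O(k^2)$. I then invoke the second-order consistency of $\Psi_k^{f,t_n}$, combined with a Taylor expansion of $f$ about $(t_n,u(t_n))$, to obtain $\Psi_k^{f,t_n}(w_{n,1}(k/2))=u(t_n)+\tfrac{k}{2}Au(t_n)+kf(t_n,u(t_n))+O(k^2)$. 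This differs from $\hat{w}_{n,2}(0)$ only by $O(k^2)$, so $v_{2}(0)=O(k^2)$.

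The second linear step proceeds identically. The defect is now $\tfrac{k}{2}A^{2}u(t_n)+kAf(t_n,u(t_n))+sA^{2}u(t_n)$, and Duhamel yields
$v_{2}(k/2)=e^{(k/2)A_0}v_{2}(0)+\tfrac{k}{2}\varphi_1\!\bigl(\tfrac{k}{2}A_0\bigr)\!\bigl[\tfrac{k}{2}A^{2}u(t_n)+kAf(t_n,u(t_n))\bigr]+\tfrac{k^{2}}{4}\varphi_2\!\bigl(\tfrac{k}{2}A_0\bigr)A^{2}u(t_n)$,
each term being $O(k^2)$. Finally $\hat{w}_{n,2}(k/2)=u(t_n)+kAu(t_n)+kf(t_n,u(t_n))$, which, compared with $u(t_n+k)=u(t_n)+k[Au(t_n)+f(t_n,u(t_n))]+O(k^2)$ using $\dot{u}=Au+f$, gives $\rho_{n+1}=O(k^2)$.

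The delicate point, and exactly what the hypotheses of Theorem~2 in \cite{ACRnl} are designed to supply, is the uniform-in-$k$ boundedness of $\varphi_1(\tfrac{k}{2}A_0)$ and $\varphi_2(\tfrac{k}{2}A_0)$ applied to vectors such as $A^{2}u(t_n)$ and $Af(t_n,u(t_n))$, which in general do not vanish at the boundary and so do not lie in the domain of $A_0$; this is the usual obstruction to naive order-reduction proofs and is where the semigroup framework of \cite{ACRnl} must be invoked verbatim. A related design remark worth recording inside the proof: the term $sAu(t_n)$ built into $\hat{w}_{n,1}$ is precisely what cancels an otherwise $O(k)$ mismatch in $v_{2}(0)$, and dropping it would reintroduce exactly the order reduction the construction is meant to avoid. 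Once these boundedness and bookkeeping issues are settled, the remaining argument is pure Taylor tracking and extends line-for-line to the fully discrete setting as in \cite{ACRnl}.
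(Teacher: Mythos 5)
Your proposal is correct and follows essentially the same route as the paper: the decomposition $w_{n,j}=\hat{w}_{n,j}+v_j$ with $\partial v_j=0$ is exactly the paper's computation of $\bar{w}_{n,j}-\hat{w}_{n,j}$, the variation-of-constants formulas and resulting $\varphi_1,\varphi_2$ terms coincide, and the final Taylor comparison of $\hat{w}_{n,2}(k/2)$ with $u(t_{n+1})$ is the same. Your added remarks on the uniform boundedness of $\varphi_j(\tfrac{k}{2}A_0)$ on data not in $D(A_0)$ and on the role of the $sAu(t_n)$ boundary term are consistent with the framework of \cite{ACRnl} that the paper invokes implicitly.
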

\begin{proof}
By definition, $\rho_{n+1}=\bar{u}_{n+1}-u(t_{n+1})$, where $\bar{u}_{n+1}$ is calculated through $\bar{w}_{n,1}$ and $\bar{w}_{n,2}$  as in (\ref{wn1}) and (\ref{wn2}) but substituting $u_n$ by $u(t_n)$. Then,
\begin{eqnarray}
\bar{w}_{n,1}'(s)-\hat{w}_{n,1}'(s)&=&A (\bar{w}_{n,1}(s)-\hat{w}_{n,1}(s))+s A^2 u(t_n), \nonumber \\
\bar{w}_{n,1}(0)-\hat{w}_{n,1}(0)&=&0, \nonumber \\
\partial[ \bar{w}_{n,1}(s)-\hat{w}_{n,1}(s)]&=&0,
\nonumber
\end{eqnarray}
from what, using the variation of constants,
$$
\bar{w}_{n,1}(\frac{k}{2})-\hat{w}_{n,1}(\frac{k}{2})=\int_0^{\frac{k}{2}}e^{(\frac{k}{2}-\tau)A_0} \tau A^2 u(t_n) d \tau= \frac{k^2}{4}\varphi_2(\frac{k}{2} A_0) A^2 u(t_n).
$$
On the other hand,
\begin{eqnarray}
\bar{w}_{n,2}'(s)-\hat{w}_{n,2}'(s)&=&A (\bar{w}_{n,2}(s)-\hat{w}_{n,2}(s))  \nonumber \\
&& +\frac{k}{2} A^2 u(t_n) + k A f(t_n,u(t_n))+s A^2 u(t_n), \nonumber \\
\bar{w}_{n,2}(0)-\hat{w}_{n,2}(0)&=&\Psi_k^{f,t_n}(\bar{w}_{n,1}(\frac{k}{2}))-u(t_n)-\frac{k}{2}A u(t_n)-k f(t_n,u(t_n)), \nonumber \\
\partial[ \bar{w}_{n,2}(s)-\hat{w}_{n,2}(s)]&=&0.
\nonumber
\end{eqnarray}
Therefore, also by the variation of constants formula,
\begin{eqnarray}
\lefteqn{\bar{w}_{n,2}(\frac{k}{2})-\hat{w}_{n,2}(\frac{k}{2})=e^{\frac{k}{2}A_0}[\Psi_k^{f,t_n}(\bar{w}_{n,1}(\frac{k}{2}))-u(t_n)-\frac{k}{2} A u(t_n)-k f(t_n,u(t_n))]} \nonumber \\
&&+\frac{k^2}{4}\varphi_1(\frac{k}{2} A_0) A^2 u(t_n)+\frac{k^2}{2}\varphi_1(\frac{k}{2} A_0) A f(t_n,u(t_n))+\frac{k^2}{4}\varphi_2(\frac{k}{2} A_0) A^2 u(t_n).
\nonumber
\end{eqnarray}
>From this, using (\ref{what1}) and (\ref{what2}) and Taylor expansions,
$$\rho_{n+1}=\bar{w}_{n,2}(\frac{k}{2})-u(t_{n+1})=O(k^2).$$
\end{proof}
In order to be able to apply a summation-by-parts argument, so that order $2$ is also proved for the global error, the following result is necessary, which assumes a bit more regularity on the solution of the problem and a bit more of accuracy on the integrator $\Psi_k$ (see \cite{ACRnl} for more details).
\begin{theorem}
Under the same hypotheses of Theorem 3 in \cite{ACRnl}, when integrating (\ref{ibvp}) with Strang method using the technique (\ref{wn1})-(\ref{wn2}) with $\hat{w}_{n,1}$ in (\ref{what1}) and $\hat{w}_{n,2}$ in (\ref{what2}), the local error $\rho_{n+1}$ satisfies $A_0^{-1} \rho_{n+1}=O(k^3)$.
\end{theorem}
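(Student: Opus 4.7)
The plan is to push the expansion of $\bar{w}_{n,2}(\tfrac{k}{2})$ begun in the proof of the previous theorem one order further in $k$, show that its $O(k^2)$ content matches the second-order Taylor term in $u(t_{n+1})$ exactly, and leave a remainder of the form $A_0\cdot O(k^3) + O(k^3)$. The two technical ingredients are, first, the stronger assumption on $\Psi_k$ inherited from Theorem 3 of \cite{ACRnl} (which I read as local order three, giving $\Psi_k^{f,t_n}(v) = v + k f(t_n,v) + \tfrac{k^2}{2}(f_t + f_u f)(t_n,v) + O(k^3)$), and second, the recursion $\varphi_j(z) = \tfrac{1}{j!} + z\varphi_{j+1}(z)$, applied at $z = \tfrac{k}{2}A_0$, which rewrites every $\varphi_j(\tfrac{k}{2}A_0)$ as its identity-component $\tfrac{1}{j!}I$ plus an explicit factor $\tfrac{k}{2}A_0$ multiplying $\varphi_{j+1}(\tfrac{k}{2}A_0)$. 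This is the mechanism that, after $A_0^{-1}$ is applied at the end, converts the $A_0$-containing leftovers into genuine $O(k^3)$ contributions.

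Concretely, I would substitute the explicit form $\bar{w}_{n,1}(\tfrac{k}{2}) = u(t_n) + \tfrac{k}{2}Au(t_n) + \tfrac{k^2}{4}\varphi_2(\tfrac{k}{2}A_0)A^2 u(t_n)$ from the previous proof into $\Psi_k^{f,t_n}(\bar{w}_{n,1}(\tfrac{k}{2}))$, Taylor-expand $f$ around $u(t_n)$, and combine with the identity for $\bar{w}_{n,2}(\tfrac{k}{2}) - \hat{w}_{n,2}(\tfrac{k}{2})$ already obtained. Replacing $e^{\tfrac{k}{2}A_0}$ by $I + \tfrac{k}{2}A_0\varphi_1(\tfrac{k}{2}A_0)$, the resulting $O(k^2)$ terms are
\begin{align*}
\tfrac{k^2}{2}\varphi_2(\tfrac{k}{2}A_0) A^2 u(t_n) &+ \tfrac{k^2}{4}\varphi_1(\tfrac{k}{2}A_0) A^2 u(t_n) + \tfrac{k^2}{2}\varphi_1(\tfrac{k}{2}A_0) A f(t_n,u(t_n)) \\
&+ \tfrac{k^2}{2}\bigl[f_u\, A u + f_t + f_u f\bigr](t_n, u(t_n)),
\end{align*}
modulo $A_0 \cdot O(k^3) + O(k^3)$. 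Peeling off the identity parts of $\varphi_1(\tfrac{k}{2}A_0)$ and $\varphi_2(\tfrac{k}{2}A_0)$ via the recursion, the identity-components contribute $\tfrac{k^2}{2}[A^2 u + Af + f_u Au + f_t + f_u f](t_n,u(t_n)) = \tfrac{k^2}{2}\ddot u(t_n)$, while the remaining $\varphi_{j+1}$ pieces each carry an explicit factor of $A_0$. Combined with the third-order Taylor expansion $u(t_{n+1}) = \hat{w}_{n,2}(\tfrac{k}{2}) + \tfrac{k^2}{2}\ddot u(t_n) + O(k^3)$, the $\tfrac{k^2}{2}\ddot u(t_n)$ contributions cancel exactly, yielding $\rho_{n+1} = A_0\cdot O(k^3) + O(k^3)$ and hence $A_0^{-1}\rho_{n+1} = O(k^3)$ by boundedness of $A_0^{-1}$.

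The main obstacle is the algebraic bookkeeping in the cancellation step: one has to verify that the three $\varphi$-function terms carrying $A^2 u$ and $Af$, together with the $\Psi_k$ contribution $\tfrac{k^2}{2}(f_t + f_u f) + \tfrac{k^2}{2}f_u A u$ (where the last summand comes from Taylor-expanding $f$ at $\bar{w}_{n,1}(\tfrac{k}{2})$ around $u(t_n)$), assemble into precisely $\tfrac{k^2}{2}\ddot u(t_n)$. This is not automatic; it reflects the design of the boundary data \eqref{what1}--\eqref{what2}, whose $sAu(t_n)$ pieces were chosen exactly so that this cancellation works out. A secondary point is that the $O(k^3)$ and $A_0\cdot O(k^3)$ remainders must be interpreted in a norm in which $A_0^{-1}$ is bounded and must involve only derivatives of $u$ and $f$ up to the order guaranteed by the regularity hypotheses of Theorem 3 of \cite{ACRnl}; this is standard and handled exactly as there.
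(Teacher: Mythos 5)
Your proposal is correct and follows essentially the same route as the paper: isolate the $O(k^2)$ terms of the local error from the previous theorem, use the $\varphi_j$ recursion to split each exponential-type term into its identity part plus a piece carrying an explicit factor of $A_0$ (equivalently, the paper applies $A_0^{-1}$ first and uses $A_0^{-1}\varphi_j = \tfrac{1}{j!}A_0^{-1} + \tfrac{k}{2}\varphi_{j+1}$), and observe that the identity parts assemble into $\tfrac{k^2}{2}[A^2u + Af + f_u Au + f_t + f_u f] = \tfrac{k^2}{2}\ddot{u}(t_n)$, which cancels against the Taylor expansion of $u(t_{n+1})$. The bookkeeping you flag as the main obstacle works out exactly as you describe, matching the paper's displayed computation.
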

\begin{proof}
It suffices to notice that the terms in $k^2$ in the previous expression of $\rho_{n+1}$ can also be written as
\begin{eqnarray}
\lefteqn{e^{\frac{k}{2} A_0}\bigg[\frac{k^2}{4} \varphi_2(\frac{k}{2} A_0) A^2 u(t_n)+\frac{k^2}{2} f_u(t_n,u(t_n))A u(t_n)}  \nonumber \\
&& \hspace{0.5cm}+\frac{k^2}{2}[f_t(t_n,u(t_n))+f_u(t_n,u(t_n))f(t_n,u(t_n))]\bigg] \nonumber \\
&&+\frac{k^2}{4} \varphi_1(\frac{k}{2} A_0) A^2 u(t_n)+\frac{k^2}{2} \varphi_1(\frac{k}{2} A_0) A f(t_n,u(t_n))  \nonumber \\
&&+\frac{k^2}{4} \varphi_2(\frac{k}{2} A_0) A^2 u(t_n)-\frac{k^2}{2}u''(t_n). \nonumber
\end{eqnarray}
Then, using that, because of the definition of $\varphi_j$ \cite{ACRnl},
\begin{eqnarray*}
 A_0^{-1} e^{\frac{k}{2} A_0}&=&A_0^{-1}+\frac{k}{2}\varphi_1(\frac{k}{2} A_0), \quad A_0^{-1} \varphi_1(\frac{k}{2} A_0)= A_0^{-1}+\frac{k}{2}\varphi_2(\frac{k}{2} A_0), \\
 A_0^{-1} \varphi_2(\frac{k}{2} A_0)&=& \frac{1}{2}A_0^{-1}+\frac{k}{2}\varphi_3(\frac{k}{2} A_0),
\end{eqnarray*}
the following is deduced simplifying the notation,
$$A_0^{-1} \rho_{n+1}=\frac{k^2}{2} A_0^{-1}[A^2 u+ f_u A u+ A f+f_t+f_u f-u'']+O(k^3)=O(k^3).$$

\end{proof}

With numerical differentiation, the problems to be solved after time semidiscretization are those in (\ref{wn1}) and (\ref{wn2}), but with
\begin{eqnarray}
\hat{w}_{n,1}(s)&=&u(t_n)+s A u(t_n)+\frac{s^2}{2} A^2 u(t_n), \label{what1nd} \\
\hat{w}_{n,2}(s)&=&u(t_n)+\frac{k}{2} A u(t_n)+\frac{k^2}{8} A^2 u(t_n)+k f(t_n,u(t_n)) \nonumber \\
&&+\frac{k^2}{2} [f_u(t_n,u(t_n)) A u(t_n) +f_t(t_n,u(t_n))+f_u(t_n,u(t_n)) f(t_n,u(t_n))] \nonumber \\
&&+s A u(t_n)+\frac{sk}{2} A^2 u(t_n) + sk A f(t_n,u(t_n))+\frac{s^2}{2} A^2 u(t_n)]. \label{what2nd}
\end{eqnarray}
Then, we have the following result for the local error which implies, through the standard argument of convergence which was used in \cite{ACRnl} for Lie-Trotter, that the global error for the full discretization behaves with order $2$ in the timestepsize.
\begin{theorem}
Under the same hypotheses of Theorem 3 in \cite{ACRnl} and assuming also that $u(t)\in D(A^3)$ for $t\in [0,T]$ and $A^3 u\in C([0,T], X)$, when integrating (\ref{ibvp}) with Strang method using the technique (\ref{wn1})-(\ref{wn2}) with $\hat{w}_{n,1}$ in (\ref{what1nd}) and $\hat{w}_{n,2}$ in (\ref{what2nd}), the local error $\rho_{n+1}$ satisfies $\rho_{n+1}=O(k^3)$.
\end{theorem}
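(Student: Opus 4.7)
The plan is to follow the same variation-of-constants strategy as in the first theorem of this appendix, but pushed one order further so that all $O(k^2)$ terms cancel and only $O(k^3)$ remainders survive. I would decompose
$$\rho_{n+1}=\bigl[\bar{w}_{n,2}(\tfrac{k}{2})-\hat{w}_{n,2}(\tfrac{k}{2})\bigr]+\bigl[\hat{w}_{n,2}(\tfrac{k}{2})-u(t_{n+1})\bigr]$$
and bound the two brackets separately. The crucial point behind the upgrade from $O(k^2)$ to $O(k^3)$ is that, compared with (\ref{what1})--(\ref{what2}), the Ansatz functions $\hat{w}_{n,1}$ and $\hat{w}_{n,2}$ in (\ref{what1nd})--(\ref{what2nd}) now include the full $s^2$-term of the exact sub-flow together with the $k^2$-Taylor tail of the nonlinear integrator; as a consequence, the Duhamel remainders $A\hat{w}_{n,j}(s)-\hat{w}_{n,j}'(s)$ become $O(k^2)$ uniformly in $s\in[0,k/2]$, instead of $O(k)$ as in the first theorem.

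For the outer bracket I would simply plug $s=k/2$ into (\ref{what2nd}), regroup the explicit $k^2$-coefficients, and compare with $u(t_{n+1})=u(t_n)+ku'(t_n)+\frac{k^2}{2}u''(t_n)+O(k^3)$ using $u'=Au+f$ and $u''=A^2u+Af+f_uAu+f_t+f_uf$. The three $O(k^2)$ contributions to $A^2u(t_n)$ present in $\hat{w}_{n,2}(k/2)$, namely $\frac{k^2}{8}+\frac{k^2}{4}+\frac{k^2}{8}$, sum exactly to $\frac{k^2}{2}$, and the remaining $k^2$-terms match $Af+f_uAu+f_t+f_uf$ termwise, so $\hat{w}_{n,2}(k/2)-u(t_{n+1})=O(k^3)$.

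For the inner bracket I would apply variation of constants in $A_0$ to the extensions with vanishing boundary, exactly as in the first theorem. The first sub-problem then yields
$$\bar{w}_{n,1}(\tfrac{k}{2})-\hat{w}_{n,1}(\tfrac{k}{2})=\tfrac{k^3}{8}\,\varphi_3(\tfrac{k}{2}A_0)\,A^3u(t_n),$$
which is $O(k^3)$ thanks to the new regularity assumption $u(t)\in D(A^3)$. For the second sub-problem, a direct computation shows that $A\hat{w}_{n,2}(s)-\hat{w}_{n,2}'(s)$ is the polynomial in $s$
$$\tfrac{k^2}{8}A^3u(t_n)+\tfrac{k^2}{2}A[f_uAu+f_t+f_uf]+\tfrac{sk}{2}A^3u(t_n)+skA^2f(t_n,u(t_n))+\tfrac{s^2}{2}A^3u(t_n),$$
whose coefficients are all $O(k^2)$ for $s\in[0,k/2]$, so the Duhamel integral over $[0,k/2]$ again produces an $O(k^3)$ contribution.

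The main obstacle, and the most delicate piece of bookkeeping, is the initial mismatch
$$\bar{w}_{n,2}(0)-\hat{w}_{n,2}(0)=\Psi_{k}^{f,t_n}\bigl(\bar{w}_{n,1}(\tfrac{k}{2})\bigr)-\hat{w}_{n,2}(0).$$
I would attack it in two steps. First, the order-two local accuracy of $\Psi_k$ allows one to replace $\Psi_{k}^{f,t_n}(\bar{w}_{n,1}(k/2))$ by the exact flow $v(k)$ of $v'(s)=f(t_n+s,v(s))$ with $v(0)=\bar{w}_{n,1}(k/2)$, at the cost of an $O(k^3)$ remainder. Second, Taylor-expanding $v(k)=v(0)+kf(t_n,v(0))+\frac{k^2}{2}[f_t+f_uf](t_n,v(0))+O(k^3)$, substituting $v(0)=u(t_n)+\frac{k}{2}Au(t_n)+\frac{k^2}{8}A^2u(t_n)+O(k^3)$ from the first sub-problem, and expanding $f(t_n,v(0))=f(t_n,u(t_n))+\frac{k}{2}f_u(t_n,u(t_n))Au(t_n)+O(k^2)$, one recovers precisely the five explicit terms that constitute $\hat{w}_{n,2}(0)$ in (\ref{what2nd}), so the initial mismatch is $O(k^3)$. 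Since $e^{(k/2)A_0}$ is uniformly bounded for small $k$, this initial contribution to $\bar{w}_{n,2}(k/2)-\hat{w}_{n,2}(k/2)$ is also $O(k^3)$, and together with the Duhamel bound we conclude $\rho_{n+1}=O(k^3)$.
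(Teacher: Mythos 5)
Your proof is correct and takes essentially the same route as the paper: the same splitting of $\rho_{n+1}$ into $\bigl[\bar{w}_{n,2}(\tfrac{k}{2})-\hat{w}_{n,2}(\tfrac{k}{2})\bigr]$ (handled by variation of constants in $A_0$, the $\varphi_3$ identity for the first sub-problem, and the second-order accuracy of $\Psi_k$ for the initial mismatch) plus the Taylor comparison $\hat{w}_{n,2}(\tfrac{k}{2})-u(t_{n+1})=O(k^3)$. As a minor aside, your Duhamel forcing term $skA^2f(t_n,u(t_n))$ for the second sub-problem is the correct one --- the paper's displayed $skAf$ appears to be a typo --- but both are $O(k^2)$ uniformly on $[0,k/2]$, so the conclusion is unaffected.
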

\begin{proof}
We notice that now
\begin{eqnarray}
\bar{w}_{n,1}'(s)-\hat{w}_{n,1}'(s)&=&A (\bar{w}_{n,1}(s)-\hat{w}_{n,1}(s))+\frac{s^2}{2} A^3 u(t_n), \nonumber \\
\bar{w}_{n,1}(0)-\hat{w}_{n,1}(0)&=&0, \nonumber \\
\partial[ \bar{w}_{n,1}(s)-\hat{w}_{n,1}(s)]&=&0,
\nonumber
\end{eqnarray}
Therefore, by the variation of constants formula,
$$
\bar{w}_{n,1}(\frac{k}{2})-\hat{w}_{n,1}(\frac{k}{2})=\int_0^{\frac{k}{2}}e^{(\frac{k}{2}-\tau)A_0} \frac{\tau^2}{2} A^3 u(t_n) d \tau= \frac{k^3}{8}\varphi_3(\frac{k}{2} A_0) A^3 u(t_n).
$$
On the other hand, simplifying the notation,
\begin{eqnarray}
\bar{w}_{n,2}'(s)-\hat{w}_{n,2}'(s)&=&A (\bar{w}_{n,2}(s)-\hat{w}_{n,2}(s))+\frac{k^2}{8} A^3 u + \frac{k^2}{2} A f_u A u \nonumber \\
&&+\frac{k^2}{2}A (f_t+f_u f)+\frac{sk}{2} A^3 u+ s k A f+\frac{s^2}{2}A^3 u,
 \nonumber \\
\bar{w}_{n,2}(0)-\hat{w}_{n,2}(0)&=&\Psi_k^{f,t_n}(\bar{w}_{n,1}(\frac{k}{2})) \nonumber \\
&& -[u+\frac{k}{2}A u(t_n)+\frac{k^2}{8} A^2 u +k f+\frac{k^2}{2}f_u Au+\frac{k^2}{2}(f_t+f_u f)], \nonumber \\
\partial[ \bar{w}_{n,2}(s)-\hat{w}_{n,2}(s)]&=&0,
\nonumber
\end{eqnarray}
from what, also by the variation of constants formula,
\begin{eqnarray}
\lefteqn{\bar{w}_{n,2}(\frac{k}{2})-\hat{w}_{n,2}(\frac{k}{2})=e^{\frac{k}{2}A_0}\bigg[\Psi_k^{f,t_n}(\bar{w}_{n,1}(\frac{k}{2}))} \nonumber \\
&& \hspace{1cm} -[u+\frac{k}{2}A u(t_n)+\frac{k^2}{8} A^2 u +k f+\frac{k^2}{2}f_u Au+\frac{k^2}{2}(f_t+f_u f)]\bigg] \nonumber \\
&&+\int_0^{\frac{k}{2}} e^{(\frac{k}{2}-\tau)A_0}[\frac{k^2}{8} A^3 u + \frac{k^2}{2} A f_u A u +\frac{k^2}{2}A (f_t+f_u f)+\frac{\tau k}{2} A^3 u+ \tau k A f+\frac{\tau^2}{2}A^3 u] d\tau  \nonumber \\
&&=O(k^3),
\nonumber
\end{eqnarray}
and therefore
\begin{eqnarray}
\bar{w}_{n,2}(\frac{k}{2})&=&u+k(Au +f)+\frac{k^2}{2}(A^2 u+Af+f_u Au+ f_t+f_u f)+O(k^3)
\nonumber \\
&=&u+k \dot{u}+ \frac{k^2}{2} \ddot{u}=u(t_{n+1})+O(k^3).
\nonumber
\end{eqnarray}

\end{proof}

\end{document}